\newcommand{\R}{{\mathbb R}}
\newcommand{\N}{{\mathbb N}}
\newcommand{\abs}[1]{\left| #1 \right|}
\newtheorem{theorem}{Theorem}[section]
\newtheorem{lemma}[theorem]{Lemma}
\newtheorem{proposition}[theorem]{Proposition}
\newtheorem{corollary}[theorem]{Corollary}
\theoremstyle{remark}
\newtheorem{assumption}[theorem]{Assumption}
\theoremstyle{remark}
\newcommand{\norm}[1]{\left|\left|#1\right|\right|}
\newcommand{\hypo}{\mathrm{hypo}\,}
\newcommand{\conv}{{\mathrm{conv}\,}}
\newcommand{\dom}{{\mathrm{dom}\,}}
\newcommand{\argmax}{\mathop{\rm argmax}}
\newcommand{\argmin}{\mathop{\rm argmin}}
\newcommand{\setint}{\operatorname{int}} %For set interior
\newcommand{\lev}{\operatorname{lev}} %For level sets
\newcommand{\Ex}{{I\kern-.35em E}}
\newcommand{\Pro}{{I\kern-.35em P}}
\newcommand{\reals}{{I\kern-.35em R}}
\newcommand{\Reals}{\overline{I\kern-.35em R}}
\newcommand{\nats}{{I\kern-.35em N}}
\begin{document}
\baselineskip=15pt

\begin{titlepage}
\vglue 0.5cm
\begin{center}
\begin{large}
{\bf Maximum a Posteriori Estimators as a Limit of Bayes Estimators
%\footnote{Julio Deride's work was initially supported by CMM =
%Centro de Modelamiento Matem{\'a}tico, Universidad de Chile.  Later,
%this material is based upon work supported in part by the
%U. S. Army Research Laboratory and the U. S. Army Research Office
%under grant numbers 00101-80683, W911NF-10-1-0246 and W911NF-12-1-0273.}

%\footnote{Robert Bassett's work was supported by the Programme
%Gaspard Monge pour l'Optimisation et la recherche op\'eationnelle
%(PGMO)}

}

\smallskip
\end{large}
\vglue 3.0truecm
\begin{tabular}{lcl}
  \begin{large} {\sl  Robert Bassett%% AUTHOR #1
                                  } \end{large} & \ \ {\phantom{\&}} \ \ &
  \begin{large} {\sl Julio Deride %% AUTHOR #3
                                  } \end{large} \\
  \\
  Mathematics  && Mathematics \\
  Univ. California, Davis && Univ. California, Davis \\
  rbassett@math.ucdavis.edu && jderide@math.ucdavis.edu  
\end{tabular}
\end{center}
\vskip 1.5truecm
\noindent {\bf Abstract}. \quad Maximum a posteriori and Bayes
estimators are two common methods of point estimation in Bayesian
Statistics. It is commonly accepted that maximum a
posteriori estimators are a limiting case of Bayes estimators with
$0$-$1$ loss. In this
paper, we provide a counterexample which shows that in general this
claim is false. We then correct the claim that by providing a
level-set condition for posterior densities such that the result holds.
Since both estimators are defined in terms of optimization problems,
the tools of variational analysis find a natural application to
Bayesian point estimation.

\vskip 1.5truecm
\halign{&\vtop{\parindent=0pt
   \hangindent2.5em\strut#\strut}\cr
{\bf Keywords}: \ Maximum a posteriori estimator, Bayes estimator, hypo-convergence, epi-convergence,
%\hglue 1.50cm .\hfill\break\cr
%\hfill\break
\hglue 1.50cm  point estimation.  \cr\cr
{\bf AMS Classification}: \quad  62C10, 62F10,  62F15, 65K10 \cr\cr
{\bf Date}:\quad \ \today \cr}
%% {\bf Date}:\quad \ September 10, 2005, \quad {\bf Revised}: \quad March 12, 2006 \cr}
\end{titlepage}
\baselineskip=15pt

\section{Introduction}

The purpose of this article is to relate two point estimates in 
Bayesian estimation: the maximum a posteriori (MAP) estimator and Bayes
estimator. Both the MAP and Bayes estimator are defined in terms of optimization
problems, so that any connection between MAP and Bayes estimators can be 
extended to a connection between corresponding optimization problems. 
It is commonly accepted (\cite[\S4.1.2]{Bayes1} 
\cite[Thm.2.4.3]{Ge05contBE},\cite[\S7.6.5]{Lee12bayesian}) that 
\emph{MAP estimation is the limit of Bayes estimation}. This
relationship is appealing from a theoretical perspective because it
allows MAP estimators to be subsumed by the statistical analysis and intuition of Bayes
estimators. However, this assertion must be carefully studied, because
it is not true in the general setting proposed in much of the
literature. We apply the theory of variational analysis, a common tool 
used in the approximation of optimization problems, to
investigate the relationship between MAP and Bayes estimators.

This paper revises the relationship between MAP and Bayes estimators,
placing the theory on a solid mathematical foundation. First, we
provide a counterexample to the commonly accepted notion of MAP
estimators as a limit of Bayes estimators having $0$-$1$ loss.
We then provide additional conditions and resurrect the limiting relationship
between the estimators by relying on the theory of variational
analysis. Because each of the estimators is defined as the
maximizer of a certain optimization problem, a natural setting for
the analysis of this problem is the space of upper semi-continuous
functions, where we use the appropriate topology for convergence of 
maximizers, namely \emph{hypo-convergence}. In general, the approach
in this paper is applicable to any estimator defined in terms of an optimization
problem.

This paper is not the first to apply variational
analytic techniques to statistical estimation. One of the earliest
examples is the notion of epi-convergence in distribution, which 
was introduced in \cite{SalWets86} and developed further in \cite{Pflug91} and 
\cite{Knight99}. The majority of
applications of epi-convergence in distribution
\cite{Pflug95,Knight01,Knight00}, and general applications of
variational analytic theory to statistical estimation problems
\cite{KingWets91,AtWe94epigraphical,Shapiro91,DupaWets88,KingRock93,Geyer94}, 
have focused on asymptotic consistency of estimators: the notion of
approximating a \emph{true} optimization problem by one constructed
from a finite sample, as the size of the sample tends to infinity. 

This paper differs from the literature in a number of ways. First, we
consider the limit of optimization problems defined over the same
measure space. Instead of contrasting the empirical solution to the
true, we instead assume that the function to be optimized is changing,
but that the ``true'' underlying measure is known a priori. In this
sense, we focus more on the approximation of loss functions than on
the measure over which their expected value is taken. We also focus on
almost-sure convergence results, as opposed to the weaker notion of
distributional convergence. Lastly, the convergence in this paper
deals explicitly with the Bayesian framework and the relationship
between MAP and Bayes estimators.

The rest of this paper is organized as follows. In section
\ref{sec:2}, a mathematical formulation of the Bayesian point estimation
framework is reviewed, including the commonly-accepted argument of the relationship
between MAP and Bayes estimators. Section \ref{sec:3} gives 
variational analytic background, introducing the notion of an upper 
semi-continuous density and the main convergence results for the hypo-convergence
topology. Section \ref{sec:4} provides an example where a sequence
of Bayes estimators corresponding to $0$-$1$ loss does not converge to a MAP
estimator. In light of the counterexample, some condition is required for MAP estimation to be a limiting case of Bayesian estimation. The informal arguments in the literature give the misleading impression that no condition is needed.
We provide a necessary condition in section \ref{sec:5}, and use it to prove positive results
relating MAP and Bayes Estimators.

We conclude this section with a comment on our notation. Preserving the notation in \cite{VaAn}, 
we use Greek letters $\nu$ and $\eta$ to denote sequence indices. We use superscript indexing, so that $x^{\nu}$ is an index of the sequence $x$.
%%%%%%%%%%%%%%%%%%%%%%%%%%%%%%%%%%%%%%%%%%%%%%%%%%%%%%%%%%%%%%%%%%%%%%%%%%%%%%%%%%%%%%%%%%%%%%%%%
%%%%%%%%%%%%%%%%%%%%%%%%%% \input{BayesNotes} %%%%%%%%%%%%%%%%%%%%%%%%%%%%%%%%%%%%%%%%%%%%%%%%%%%%%
\section{Bayesian Background}\label{sec:2}

In this section we review necessary Bayesian preliminaries. In point
estimation, we wish to infer the value of an unknown parameter
$\theta$ from an observation $x$.

A \emph{parametric model} is a family of distributions 
$p(\cdot | \theta): \mathcal{X} \to \mathbb{R}$ indexed by a
set $\Theta \subseteq \mathbb{R}^{n}$. The set $\Theta$ is referred to
as the parameter space, and the measurable space $\mathcal{X}$ is the
called the sample space. We assume that each of these distributions
is absolutely continuous with respect to Lebesgue measure; hence each 
$p(\cdot | \theta)$ is a probability density function.

A \emph{Bayesian model} is a parametric model and a distribution
$\pi$ on $\Theta$. The distribution $\pi$ is called a $\emph{prior}$
on the parameter space $\Theta$. The Bayesian point estimation problem
is the following: Given an observation $x$ of a random variable $X$ on $\mathcal{X}$,
find $\hat{\theta} \in \Theta$ such that $p(\cdot | \hat{\theta})$ is a ``best
choice'' for the distribution of $X$ among distributions in the parametric 
model. In this sense, a Bayesian point estimate is a function
$\hat{\theta}: \mathcal{X} \to \Theta$ which takes observations to
parameter values. Because we only refer to Bayesian point estimates in
this paper, we simply refer to them as estimators.

Given a Bayesian model and an observation $x$, we define a
posterior distribution on the parameter space $\Theta$ through Bayes'
rule
\begin{equation} \label{posterior}
\pi(\theta|x) = \frac{p(x|\theta) \pi(\theta)}{\int_{z \in \Theta}
p(x|z) \pi(z) \, dz}
\end{equation}
We assume that for each $x\in \mathcal{X}$, $\int_{z \in \Theta}
p(x|z) \pi(z) \, dz$ is finite and nonzero, so that
\eqref{posterior} defines a density. By taking
$\pi(\theta|x) = 0$ outside of $\Theta$, we extend
the posterior density so that it is defined on all of $\R^n$ for
each $x$. Hence, without loss of generality, we assume that $\Theta =
\R^n$.

A common method of point estimation is through Bayes
estimators. Given a loss function $L: \Theta \times \Theta \to [0,
\infty)$ which quantifies cost for discrepancies in the
true and estimated parameter value, a
\emph{Bayes estimator} is an estimator $\hat{\theta}_{B}$
which minimizes posterior expected loss.

\begin{equation}\label{Bayes}
\hat{\theta}_{B}(x) \in \argmin_{\theta \in \Theta} \mathbb{E}^{z|x}\left[
L(\theta, z) \right] \ = \argmin_{\theta \in \Theta} 
\int_{z \in \Theta} L(\theta, z) \pi(z|x) \, dz.
\end{equation}

The flexibility and ubiquity of \eqref{Bayes} should not be understated.
With different choices of loss functions, one can define the posterior
mean, median and other quantiles \cite[2.5]{Bayes1}, as well as a variety of robust
variants through expected loss minimization \cite[10.6]{ElemStats}. For our purposes, we will
focus on one particular family of loss functions, the $0$-$1$ loss
functions. The $0$-$1$ loss function $L^c$ is defined for any $c>0$ as

\begin{equation}
L^{c}(\theta, z) = \begin{cases} 0 & \text{ for } \norm{\theta -z} <
\frac{1}{c} \\ 1 & \text{ otherwise.} \end{cases}
\end{equation}
In the above and what follows, $\norm{\cdot}$ denotes the standard Euclidean norm. 
Because of the topological nature of the arguments of that follow, any equivalent norm
could be used instead. We focus on standard Euclidean for ease of exposition.

The rest of this paper will deal almost exclusively with the $0$-$1$ loss,
so we emphasize our notation: superscript $L$ denotes the $0$-$1$ loss 
function. We also denote by $\hat{\theta}_{B}^{c}$ the Bayes estimator
associated with the $0$-$1$ loss function $L^c$.

Another popular estimation procedure maximizes the posterior density
directly. This defines a \emph{maximum a posteriori estimator},
$\hat{\theta}_{MAP}$, which is given by the set of modes of the posterior
distribution
$$\hat{\theta}_{MAP}(x) \in \argmax_{\theta} \pi(\theta|x).$$

This estimator can be interpreted as an analog of maximum likelihood
for Bayesian estimation, where the distribution has become a posterior.
A number of sources (\cite[\S4.1.2]{Bayes1}
\cite[Thm.2.4.3]{Ge05contBE},\cite[\S7.6.5]{Lee12bayesian}, \cite{Paper1},
\cite{LectNotes}) claim that maximum a posteriori estimators are
limits of Bayes estimators, in the following sense. 
Consider the sequence of 0-1 loss functions,
$\{L^\nu:\reals^n\times\reals^n\to[0,+\infty)\}_{\nu \in \nats}$,
defined as 
\begin{equation} \label{01Seq}
L^\nu(\theta,z)=\begin{cases} 0 & {\rm
for}\,\|\theta-z\|<\frac{1}{\nu}\\
1 & {\rm otherwise} \end{cases},
\end{equation}
and define $\hat{\theta}_{B}^\nu$ as the Bayes estimator associated to the
loss function $L^\nu$, for each $\nu$,
i.e.,
\[ \hat{\theta}_{B}^{\nu}(x) \in \argmin_{\theta\in\Theta}
\Ex^{z|x} \left[ L^\nu(\theta,z) \right] .\]
First we translate the Bayes estimator to a maximization problem.
\begin{align} \label{min2max}
\hat{\theta}_{B}^{\nu}(x)& \in \argmin_{\theta\in\Theta} \int_{z \in
\Theta}
L^\nu(\theta,z)\pi(z|x)\,dz \nonumber\\
&=\argmin_{\theta\in\Theta}\left(1-\int_{\|\theta-z\|<\frac{1}{\nu}}
\pi(z|x)\,dz\right) \nonumber\\
&=\argmax_{\theta\in\Theta} \int_{\|\theta-z\|<\frac{1}{\nu}}
\pi(z|x)\,dz.
\end{align}
The claim is that the sequence $\hat{\theta}^{\nu}_{B}$ converges to
$\hat{\theta}_{MAP}$. When the justification is provided, it proceeds as follows.
Taking the limit as $\nu\to\infty$, we have
\begin{align} \label{limitEst}
\lim_{\nu\to\infty} \hat{\theta}_{B}^{\nu}(x)& \in \lim_{\nu\to\infty}
\argmax_{\theta\in\Theta} \int_{\|\theta-z\|<\frac{1}{\nu}}
\pi(z|x)\,dz  \\
&= \argmax_{\theta\in\Theta} \pi(\theta|x) \nonumber\\
&=\hat{\theta}_{MAP}(x). \nonumber
\end{align}

This justification does not hold in general, and in fact MAP estimators are not
necessarily a limiting case of Bayes estimators under the $0$-$1$ loss indicated
above. In section \ref{sec:4}, we exhibit a continuous and unimodal
posterior density which is a counterexample to the claim. The problem 
lies in the limiting argument in \eqref{limitEst}--the limit of
maximizers is not a maximizer without additional conditions, which we
establish in section \ref{sec:5}.

It is worthwhile to note what is correct about the argument in
\eqref{limitEst}. 
Denoting by $s_n$ the volume of the unit ball in $\R^n$, we have that 
$$\lim_{\nu \to \infty} s_{n} \cdot \nu^{n} \cdot 
\int_{\norm{\theta-z}<\frac{1}{\nu}} \pi(z|x) \, dz =
\pi(\theta|x)$$
$\Theta$-almost everywhere by the Lebesgue differentiation theorem.
This gives that a scaled version of $\int_{\norm{\theta-z} <
\frac{1}{\nu}} \pi(z|x) \, dz$ converges pointwise a.e. to
$\pi(\theta|x)$. But pointwise convergence does not guarantee convergence of
maximizers. This will require the notion of hypo-convergence of
functions, which we introduce in the next section.

%Lebesgue differentiation theorem? for pointwise convergence of
%min2max
%%%%%%%%%%%%%%%%%%%%%%%%%%%%%%%%%%%%%%%%%%%%%%%%%%%%%%%%%%%%%%%%%%%%%%%%%%%%%%%%%%%%%%%%%%%%%%%%%
%%%%%%%%%%%%%%%%%%%%%%%%%% \input{HypoBackground} %%%%%%%%%%%%%%%%%%%%%%%%%%%%%%%%%%%%%%%%%%%%%%%%%%%
\section{Convergence of maximizers for Non-Random Functions}\label{sec:3}
\subsection{General setting}
This section summarizes the main results for convergence of optimization 
problems. The more common approach in the optimization literature is that of minimization, 
rather than maximization, where the theory of epi-convergence is developed for extended
real-valued functions. Here, an adaptation to the maximization setting is presented,
which is more natural in our estimation setting.

A function $f:\reals^n \to \Reals$ is said to be \emph{proper} if $f$ is 
not constantly $-\infty$ and never takes the value $\infty$. The 
\emph{effective domain} of the function $f$ is the set
\[\dom f = \{ x|\, f(x)<\infty\},\]
and its \emph{hypograph} is the set in $\reals^{n+1}$
\[\hypo f = \{ (x, \lambda) |\, \lambda \leq f(x) \}.\]

The function $f$ is called \emph{upper semi-continuous} (usc) if its hypograph is
a closed subset of $\reals^{n+1}$. An equivalent condition is that for
each $\alpha \in \mathbb{R}$ the upper level set of $f$
\[\lev_{\geq \alpha} f = \{ x \in \reals^{n}|\,f(x) \geq \alpha\}\]
is closed. A sequential definition of upper semi-continuity can
also be stated: for each  $x \in\reals^{n}$, and each sequence $x^{\nu}$ converging to $x$,
\[\limsup_{x^{\nu} \to x} f(x^{\nu})\leq f(x).\]

We say that a sequence of functions $f^{\nu}$ 
\emph{hypo-converges} to a function $f$ if both
of the following hold for each $x \in \mathcal{X}$.
\begin{align*}
& \liminf_\nu f^{\nu}(x^{\nu}) \geq f(x)  \text{  for some }
x^{\nu} \to x \\
& \limsup_\nu f^{\nu}(x^{\nu}) \leq f(x)  \text{  for every }
x^{\nu} \to x,
\end{align*}

The notion of hypo-convergence is well-developed because of its
importance in proving properties about sequences of optimization problems.
An equivalent definition of hypo-convergence follows by identifying
each function with its hypograph, and applying
the notion of set convergence \`a-la Painlev\'e-Kuratowski.
Or, one can characterize hypo-convergence via the
Attouch-Wets topology on hypographs. We refer the reader to
\cite[Ch.7]{VaAn} for details on the former and  \cite[Ch.3]{Beer93top} on the latter.

The following alternative characterization of hypo-convergence will be
useful in the sections that follow.
\begin{proposition}{\emph{\cite[7.29]{VaAn}}} \label{HitMiss}
$f^{\nu} \xrightarrow{hypo} f$ with $f$ usc if and only if the
following two conditions hold:
\begin{align*}
\limsup_{\nu} \left( \sup_{B} f^{\nu} \right) \leq \sup_{B} f \text{ for
every compact set } B \subseteq \mathbb{R}^{n} \\
\liminf_{\nu} \left( \sup_{O} f^{\nu} \right) \geq \sup_{O} f \text{ for
every open set } O \subseteq \mathbb{R}^{n}
\end{align*}
\end{proposition}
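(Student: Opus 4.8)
The plan is to prove the two implications separately, pairing each displayed inequality with one half of the hypo-convergence definition: the open-set inequality will encode the recovery-sequence (``liminf'') half, and the compact-set inequality the upper-bound (``limsup'') half. Throughout I write $B(x,r)$ for the open ball of radius $r$ centered at $x$ and $\bar B(x,r)$ for its closure, and I will use that $f$ is usc.

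For the forward direction I would assume $f^\nu \xrightarrow{hypo} f$ and derive both inequalities. To get the open-set inequality, fix an open set $O$ and a point $x \in O$; the recovery-sequence condition supplies some $x^\nu \to x$ with $\liminf_\nu f^\nu(x^\nu) \geq f(x)$, and since $O$ is open with $x^\nu \to x \in O$ we have $x^\nu \in O$ eventually, so $\sup_O f^\nu \geq f^\nu(x^\nu)$ for large $\nu$ and hence $\liminf_\nu \sup_O f^\nu \geq f(x)$; taking the supremum over $x \in O$ closes this case. To get the compact-set inequality, fix a compact $B$ and argue by contradiction: if $\limsup_\nu \sup_B f^\nu > \sup_B f$ (allowing the value $+\infty$), pass to a subsequence along which $\sup_B f^\nu$ converges to this limsup and pick $x^\nu \in B$ with $f^\nu(x^\nu) \geq \sup_B f^\nu - 1/\nu$, so that $\limsup_\nu f^\nu(x^\nu) > \sup_B f$; by compactness a further subsequence gives $x^\nu \to x \in B$, and the upper-bound half of hypo-convergence yields $\limsup_\nu f^\nu(x^\nu) \leq f(x) \leq \sup_B f$, a contradiction.

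For the backward direction I would assume the two inequalities and reconstruct hypo-convergence. To build a recovery sequence at a fixed $x$, I apply the open inequality to the shrinking balls $O_k = B(x,1/k)$: since $x \in O_k$, we get $\liminf_\nu \sup_{O_k} f^\nu \geq \sup_{O_k} f \geq f(x)$, so there are strictly increasing thresholds $N_k$ beyond which $\sup_{O_k} f^\nu > f(x) - 1/k$; choosing $x^\nu \in O_k$ with $f^\nu(x^\nu) > f(x) - 1/k$ for $N_k \leq \nu < N_{k+1}$ produces $x^\nu \to x$ with $\liminf_\nu f^\nu(x^\nu) \geq f(x)$. For the upper-bound condition, let $x^\nu \to x$ and suppose $\limsup_\nu f^\nu(x^\nu) = \beta > f(x)$; pick $\alpha \in (f(x),\beta)$ and use upper semicontinuity of $f$ to find $r>0$ with $f < \alpha$ on $B(x,r)$, whence $\sup_{\bar B(x,r/2)} f \leq \alpha$. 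The compact inequality applied to $\bar B(x,r/2)$ gives $\limsup_\nu \sup_{\bar B(x,r/2)} f^\nu \leq \alpha$, and since $x^\nu \in \bar B(x,r/2)$ eventually we conclude $\limsup_\nu f^\nu(x^\nu) \leq \alpha < \beta$, a contradiction.

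The delicate step is the upper-bound half of the backward direction: passing from a statement about suprema over compact sets to a pointwise $\limsup$ inequality along an \emph{arbitrary} sequence $x^\nu \to x$ is exactly where upper semicontinuity of $f$ is indispensable, since it is what lets me trap $f$ below the constant $\alpha$ on a full closed ball around $x$ and then feed that ball into the compact inequality. The diagonal threshold construction for the recovery sequence is the other point needing care, mostly bookkeeping to guarantee simultaneously that $x^\nu \to x$ and that $\liminf_\nu f^\nu(x^\nu) \geq f(x)$.
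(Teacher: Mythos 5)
The paper does not prove this proposition at all; it is imported verbatim from \cite[7.29]{VaAn}, where the argument is organized around Painlev\'e--Kuratowski convergence of hypographs and the associated hit-and-miss criteria for set convergence. Your proof instead works directly from the sequential definition of hypo-convergence, and it is correct; this is the more elementary, self-contained route, at the cost of not exposing the underlying set-convergence structure that makes the ``compact/open'' pairing look inevitable. Two small points deserve attention. First, in the forward compact-set step you apply the upper-bound half of hypo-convergence along a subsequence $x^{\nu_k} \to x$, whereas the definition as stated in the paper quantifies over full sequences; you should extend the subsequence to a full sequence (e.g.\ by setting $x^{\nu} = x$ for indices off the subsequence), note that the limsup over the full sequence dominates the one over the subsequence, and then invoke the definition. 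Second, expressions such as $\sup_B f^{\nu} - 1/\nu$ and $f(x) - 1/k$ require the routine modification when the quantities involved equal $\pm\infty$ (e.g.\ replace $f(x) - 1/k$ by $\min\{f(x),k\} - 1/k$); this is not purely cosmetic in the present setting, since the paper explicitly allows usc densities to take the value $+\infty$. Neither point is a genuine gap, and the delicate step you flag --- using upper semicontinuity of $f$ to trap $f$ below a level $\alpha$ on a closed ball before feeding that ball to the compact-set inequality --- is exactly where the usc hypothesis enters.
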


A sequence of functions $f^{\nu}$ is said to be \emph{eventually
level-bounded} if for each $\alpha \in \mathbb{R}$ the sequence of
upper level sets $\lev_{\geq \alpha} f^{\nu}$ is eventually bounded.
That is, there exists a bounded set $M$, $\alpha \in \mathbb{R}$ and $m \in \mathbb{N}$ 
such that for all $\nu \geq m$, $\lev_{\geq \alpha} f^{\nu} \subseteq M$.

The wide-spread adoption of hypo-convergence in optimization is largely
due to the following theorem and its related extensions
\begin{theorem}{\emph{\cite[7.33]{VaAn}}}
Assume $f^{\nu} \xrightarrow{hypo} f$, where $f^{\nu}$ is eventually
level-bounded and $\{f^{\nu},f\}$ are usc and proper. Then
\[\sup\, f^{\nu} \to \sup\, f\]
and any point which is a limit of maximizers of $f^{\nu}$ maximizes $f$.
\end{theorem}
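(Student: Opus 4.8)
The plan is to split the statement into its two assertions—convergence of optimal values $\sup f^\nu \to \sup f$, and the maximizing property of limits of maximizers—and to obtain the value convergence from the hit-and-miss characterization in Proposition \ref{HitMiss}, reserving the sequential $\limsup$ half of the definition of hypo-convergence for the statement about maximizers. The lower estimate for the values will come essentially for free: applying the open-set inequality of Proposition \ref{HitMiss} with $O = \reals^n$ gives $\liminf_\nu (\sup f^\nu) \geq \sup f$ at once, since $\reals^n$ is open.

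The real work, and the only place eventual level-boundedness enters, is the reverse inequality $\limsup_\nu (\sup f^\nu) \leq \sup f$. The obstacle is that $\reals^n$ is not compact, so the compact-set inequality of Proposition \ref{HitMiss} does not apply to it directly; a priori the supremum of $f^\nu$ could leak off to infinity. To localize it, I would first note that properness of $f$ forces $\sup f > -\infty$, so I can fix a level $\alpha < \sup f$. Eventual level-boundedness then supplies a bounded set $M$ and an index $m$ with $\lev_{\geq \alpha} f^\nu \subseteq M$ for all $\nu \geq m$; set $B = \cl M$, which is compact. By the lower estimate already in hand, $\sup f^\nu > \alpha$ for all large $\nu$, while every point outside $B$ has $f^\nu < \alpha$, so the supremum is governed entirely by $B$, i.e. $\sup f^\nu = \sup_B f^\nu$ for large $\nu$. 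The compact-set inequality then gives $\limsup_\nu (\sup f^\nu) = \limsup_\nu (\sup_B f^\nu) \leq \sup_B f \leq \sup f$. Chaining the two estimates yields $\sup f \leq \liminf_\nu (\sup f^\nu) \leq \limsup_\nu (\sup f^\nu) \leq \sup_B f \leq \sup f$, so all quantities coincide; since $f$ is usc and $B$ is compact, $\sup_B f$ is attained and finite, which certifies both that $\sup f$ is finite and that $\sup f^\nu \to \sup f$.

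For the second assertion, let $x^\nu \in \argmax f^\nu$ and suppose a subsequence $x^{\nu_k} \to \bar x$. The sequential $\limsup$ condition of hypo-convergence says $\limsup_\nu f^\nu(y^\nu) \leq f(\bar x)$ for every $y^\nu \to \bar x$; I would apply it to the sequence equal to $x^{\nu_k}$ along the subsequence and to $\bar x$ elsewhere, and use that a subsequential $\limsup$ is dominated by the full-sequence $\limsup$, to get $\limsup_k f^{\nu_k}(x^{\nu_k}) \leq f(\bar x)$. But $f^{\nu_k}(x^{\nu_k}) = \sup f^{\nu_k} \to \sup f$ by the first part, so $f(\bar x) \geq \sup f$, forcing $f(\bar x) = \sup f$; that is, $\bar x$ maximizes $f$. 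If one also wants maximizers to exist, the same level-boundedness together with upper semicontinuity guarantees that $f^\nu$ attains its supremum on the compact set $B$ for large $\nu$.

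The hard part will be the middle step: upgrading the compact hit-and-miss inequality to a statement about the global supremum over $\reals^n$. This is exactly where eventual level-boundedness is indispensable—it is what prevents the supremum of $f^\nu$ from escaping to infinity, and it is the reason the theorem fails without it. Everything else is bookkeeping with $\liminf$/$\limsup$ and the definitions of usc and properness.
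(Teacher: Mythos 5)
Your proof is correct, but there is nothing in the paper to compare it against: the statement is quoted from \cite[7.33]{VaAn} and used as a black box, with no in-paper proof. That said, your argument is essentially the same localization device the authors themselves deploy when proving Theorem \ref{dettheorem}: the open-set half of Proposition \ref{HitMiss} with $O=\reals^n$ gives $\liminf_\nu\left(\sup f^\nu\right)\geq\sup f$; eventual level-boundedness at a single level $\alpha<\sup f$ (which is all the definition needs to supply) confines $\sup f^\nu$ to a fixed compact set $B$ for large $\nu$, so the compact-set half yields $\limsup_\nu\left(\sup f^\nu\right)\leq\sup_B f\leq\sup f$; and the chain collapses, incidentally certifying $\sup f=\sup_B f<\infty$ since $f$ is usc and nowhere $+\infty$ on the compact $B$ (so fixing $\alpha<\sup f$ at the outset involves no circularity even if one worries that $\sup f$ might be $+\infty$). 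Your handling of the maximizer claim via the sequential $\limsup$ inequality of hypo-convergence, applied to the sequence that equals $x^{\nu_k}$ along the subsequence and $\bar x$ elsewhere, is clean and in fact tidier than the corresponding step in the paper's proof of Theorem \ref{dettheorem}, which at that point appeals to upper semicontinuity of $f$ along $f(\hat\theta^{\nu_k})$ rather than to the hypo-convergence inequality relating $f^{\nu_k}(\hat\theta^{\nu_k})$ to $f(\tilde\theta)$. In short: correct, complete, and consistent with how the surrounding results in the paper are actually proved.
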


This theorem is attractive because it effectively gives the
convergence of maximizers for approximations to optimization problems.
In our application to Bayesian point estimation, we wish to establish
a similar theorem providing convergence of estimators for approximations
to MAP and Bayes estimation. We do so in section \ref{sec:5}.

\subsection{Upper Semi-Continuous Densities}
Before proceeding, we list the primary assumption that we use throughout
the paper.
\begin{assumption} \label{assump}
For each $x \in \mathcal{X}$, the random vector $\theta$ has a density
$\pi(\theta|x): \mathbb{R}^{n} \to \overline{\mathbb{R}}$
(with respect to Lebesgue measure $\mu$) which is
continuous almost everywhere. In
other words, for each $x$ there is a set $C \subseteq \mathbb{R}^n$ such 
that $\pi(\theta|x)$ is
continuous at each point in $C$ and $\mu \left(\mathbb{R}^n \setminus C 
\right) = 0$. \end{assumption}

In this framework, we allow a density $\pi(\theta|x)$ to take $\infty$
as a value, so as not to rule out common densities like the gamma
distribution. Obviously a density cannot take $\infty$ as a value on a set of
positive measure.

When dealing with arbitrary measurable functions, the notion of
pointwise value is inherently ambiguous because measurable functions
are only unique up to alteration on sets of measure zero. The added
structure of continuity allows us to define the notion of pointwise
value, by referring to the pointwise value of the unique continuous
representative of the measurable function. We would like to generalize
this to a broader class of functions.

Assumption \ref{assump} provides some minimalist structure for which we can also 
define an unambiguous notion of pointwise value of a function. This is necessary because 
without some method of defining a pointwise value of a density, the notion of maximizing 
the density (as required in MAP estimation) is meaningless.

For each $x$, take $\pi(\theta|x)$ to be its
continuous version on $C$. On $\mathbb{R}^n \setminus C$, take $\pi(\theta|x)$ to be the
upper semi-continuous envelope of $\pi(\theta|x)|_{C}$. That is, for any $\theta \in
\mathbb{R}^n$
\begin{equation} \label{uscDef}
\pi(\theta|x) = \sup \{\limsup \pi(\theta^{\nu}|x) : \theta^{\nu} \to \theta, \theta^{\nu} \subset C
\}.
\end{equation}
The set on the right side of \eqref{uscDef} is nonempty because the
complement of a measure zero set is dense. Furthermore, because
$\mathbb{R}^{n} \setminus C$ has measure zero, the integral of $\pi(\theta|x)$
over $\mathbb{R}^n$ is unchanged by taking the upper semi-continuous
envelope.
%Let x \in R^n. An open ball around x has pos. measure, so it contains
%a point of C. Shrink the ball to construct a convergent sequence.
We refer to densities which satisfy assumption \ref{assump} and
\eqref{uscDef} as \emph{upper semi-continuous (usc) densities}. In the
remaining sections, densities are assumed to be upper
semi-continuous.

Upper semi-continuous densities are a natural class of functions to
consider because they provide a large amount of flexibility while
still retaining
the notion of pointwise value. They contain continuous densities as a
special case, but also include intuitive concepts like histograms.
Many nonparametric density estimation procedures also produce
functions in this family, e.g. \cite{episplines},
\cite{chan14}, and kernel density estimates with
piecewise continuous kernels.

%%%%%%%%%%%%%%%%%%%%%%%%%%%%%%%%%%%%%%%%%%%%%%%%%%%%%%%%%%%%%%%%%%%%%%%%%%%%%%%%%%%%%%%%%%%%%%%%%
%%%%%%%%%%%%%%%%%%%%%%%%%% \input{Counterexample} %%%%%%%%%%%%%%%%%%%%%%%%%%%%%%%%%%%%%%%%%%%%%%%%%%%
\section{Counterexample}\label{sec:4}

This section provides a counterexample to the following claim: Any limit
of Bayes estimators $\hat{\theta}_{B}^{c^{\nu}}$ having $0$-$1$ loss
$L^{c^{\nu}}$ with $c^{\nu} \to \infty$ is a MAP estimator. First, we
provide the Bayesian model. Let $\Theta = \R$ and $\mathcal{X} = \R$.
Take $p(x|\theta)$ to be a standard normal distribution. In other words, the parameter $\theta$
has no influence on the distribution of $X$. While trivial
in the fact that $\theta$ does not influence $X$, this example
will facilitate computation and
illustrate the issue with the argument in \eqref{limitEst}. 
Consider a prior density $\pi: \reals \to \reals$ given by
 
\[\pi(\theta)=
\begin{cases}
1-\sqrt{\abs{2\theta}} & \theta \in \left(\frac{-1}{2}, \frac{1}{2}\right) \\
(2^n-1) 4^n (\theta + \frac{1}{8^n} -n) & \theta \in \left[n-\frac{1}{8^n},
n\right], \; n \in \mathbb{N}_{+}  \\
1-\frac{1}{2^n} & \theta \in [n, n+\frac{1}{2^n}-\frac{1}{8^{n}}], \;
n \in \mathbb{N}_{+}\\
(1-2^n)4^n \left(\theta-(n+\frac{1}{2^n}) \right) &
\theta \in \left[n+\frac{1}{2^n}-\frac{1}{8^{n}}, n+\frac{1}{2^n}\right] \; n \in
\mathbb{N}_{+} \\
0 &\text{ otherwise}
\end{cases}.\]
This density is depicted in Figure \ref{fig:2}. One can easily check
that $\pi$ is indeed a continuous density function. Because $\theta$
and $X$ are independent in this Bayesian model, equation
\eqref{posterior} gives that the posterior is equal to the prior. Thus, for the
remainder of this section, we will refer to $\pi$ as the posterior
distribution, and drop the dependence on $x$ in the notation. We also
comment that any choice of parametric model where altering $\theta$ does not
change $p(x|\theta)$ will give rise to the same posterior
distribution--the standard normal assumption is just for concreteness.

\begin{figure}
\centering
%  \includesvg[width=\linewidth,height=.5\linewidth]{Contdensity1}
  \includegraphics[width=0.75\linewidth]{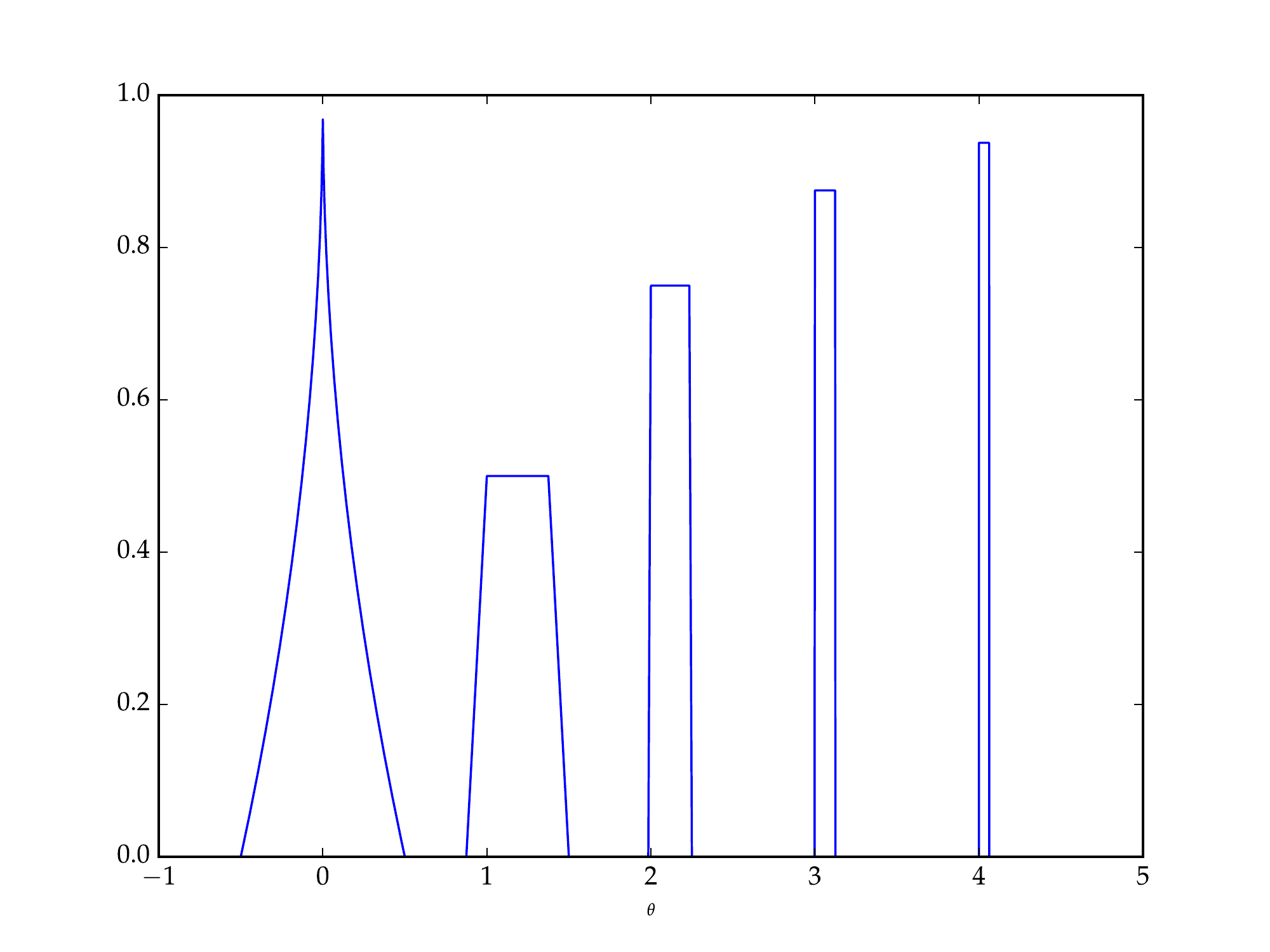}
  \caption{Posterior Density $\pi$}
  \label{fig:2}
\end{figure}

Note that the posterior density $\pi$ has a unique maximum, or 
\emph{posterior mode}, at $\theta=0$, where it takes the value $1$. Therefore,
the MAP estimator of this Bayesian model is $\hat{\theta}_{MAP}=0$.
On the other hand, considering the 0-1 
loss function $L^c$, we recall the equivalent definition of the associated 
Bayes estimator from equation \eqref{min2max}
\begin{equation}\label{eq2}\hat{\theta}^{c}_{B} \in \argmax_{\theta} \int_{\norm{z -\theta}<\frac{1}{c}} \pi(z)\,dz.\end{equation}
We next analyze the limiting behavior of the sequence of Bayes Estimators 
$\hat{\theta}^c_B$ when $c \to \infty$.

Consider the sequence of scalars $\left\{c^{\nu}=  2 \cdot 4^{\nu} :\nu \in \N_{+}\right\}$,
and the associated family of loss functions $L^{c^\nu}$. We will prove 
that for the Bayesian model that we have constructed,
 $\hat{\theta}^{c^{\nu}}_{B} \not \to \hat{\theta}_{MAP}$. This will
show that a limit of Bayes estimators with $0$-$1$ loss is not necessarily a MAP
estimator. In order to find the maximum in \eqref{eq2},
we can consider the maximum on each nonzero piece of $\pi$. For the
non-zero portion around the origin, the integral in \eqref{eq2} is
obviously maximized at $0$. Furthermore, for each ``bump'' of $\pi$,
the integral is maximized (perhaps non-uniquely) at the midpoint of
the interval where the density is constant. In order to show that
$\hat{\theta}_{B}^{c^{\nu}} \not \to \hat{\theta}_{MAP}$, it suffices to
show that for each $\nu$ there is a $\theta \not \in (-1/2,1/2)$ 
such that the evaluation of \eqref{eq2} at $\theta$ is strictly greater
than the evaluation at zero. This gives that $\hat{\theta}_{B}^{c^{\nu}}$
cannot have a limit point in $(-1/2, 1/2)$, and hence cannot have $0$,
the MAP estimator, as a limit point. We now perform the required
calculations.
\begin{enumerate}[i.]
\item Evaluation of \eqref{eq2} at $0$.
\begin{align}\label{Bayes0}
& \int_{\abs{z} < \frac{1}{2 \cdot 4^{\nu}}} 1- \sqrt{\abs{2z}}\,dz
= 2 \int_{0}^{\frac{1}{2 \cdot 4^{\nu}}} 1 - \sqrt{2z} \, dz 
= \frac{1}{4^{\nu}} - \frac{ 4 \sqrt{2}}{3 \cdot 2^{\frac{3}{2}}}
\cdot \frac{1}{8^{\nu}}
\end{align}

\item For each $\nu \in \N$, evaluating \eqref{eq2} at
 $\theta = 2 \nu + \frac{1}{2^{2\nu +1}}$ gives
\[\int_{\abs{z - \left( 2 \nu + \frac{1}{2^{2\nu +1}} \right)} 
< \frac{1}{2 \cdot 4^{\nu}}} \pi(z)\,dz 
=  \int_{[2^{\nu}, 2^{\nu} + \frac{1}{2^{2\nu}} - \frac{1}{8^{2\nu}}]}
\pi(z)\,dz + \int_{[2^{\nu} + \frac{1}{2^{\nu}}- \frac{1}{8^{2\nu}},
2^{\nu} + \frac{1}{2^{2\nu}}]} \pi(z)\,dz, \]
%Robert pick up from here
The first part of this sum is an integral over a constant piece of
$\pi$. The second
is a linear piece of $\pi$. Bounding the sum of the integrals below by 
the value of just the integral over the constant piece, we have that
\begin{align} \label{OtherRisk}
\int_{\abs{z - \left(2 \nu + \frac{1}{2^{2\nu +1}}\right)} < \frac{1}{2 \cdot 4^{\nu}}} \pi(z)\,dz \geq \left(1- \frac{1}{2^{2\nu}} \right) \, \left( \frac{1}{2^{2\nu}} -
\frac{1}{8^{2\nu}} \right) = \frac{1}{4^{\nu}} - \left(2 - \frac{1}{16^{\nu}}\right) \, \frac{1}{16^{\nu}}.
\end{align}
\end{enumerate}
Since \eqref{OtherRisk} is strictly greater than \eqref{eq2} for all
$\nu \geq 1$, we conclude that $\hat{\theta}^{c^{\nu}}_{B} \not \in
(\frac{-1}{2}, \frac{1}{2})$ for all $\nu \geq 1$. 
Hence the sequence
of Bayes estimators $\hat{\theta}^{c^{\nu}}_{B}$ has $c^{\nu} \to \infty$,
but does not have a MAP estimator as a limit point. This concludes the
counterexample and shows that the claim is false.

%%%%%%%%%%%%%%%%%%%%%%%%%%%%%%%%%%%%%%%%%%%%%%%%%%%%%%%%%%%%%%%%%%%%%%%%%%%%%%%%%%%%%%%%%%%%%%%%%
%%%%%%%%%%%%%%%%%%%%%%%%%% \input{PosResults} %%%%%%%%%%%%%%%%%%%%%%%%%%%%%%%%%%%%%%%%%%%%%%%%%%%%%
\section{Convergence results}\label{sec:5}
In this section we provide conditions on posterior distributions which
guarantee that a sequence of Bayes Estimators with $0$-$1$ loss has a
MAP estimator as a limit point. In addition, we provide a partial
converse by showing that each MAP estimator is a limit of approximate
Bayes estimators.

Define an estimator $\hat{\theta}$ of $\theta$ to be an
$\epsilon$-\emph{approximate Bayes estimator} with respect to loss $L$
if for each $x \in \mathcal{X}$.
\[\int_{z \in \Theta} L(\hat{\theta}(x), z) \pi(z|x) \, dz \geq
\sup_{\theta \in \Theta} \int_{z \in \Theta} L(\theta, z)
\pi(z|x) \, dz - \epsilon(x).\]
Here, $\epsilon$ is a function from $\mathcal{X}$ to $\R$.  We say that
$\hat{\theta}$ is a \emph{limit of approximate Bayes estimators} 
if there are sequences $V^{\nu}$, $\hat{\theta}^{\nu}$ and $\epsilon^{\nu}$ such
that $\hat{\theta}^{\nu}$ converges almost surely to $\hat{\theta}$,
$\hat{\theta}^{\nu}$ is an $\epsilon^\nu$-approximate Bayes estimator with
respect to loss $V^{\nu}$ for each $\nu$, and $\epsilon^\nu$ converges
$\mathcal{X}$-surely to $0$.

As discussed in the introduction, we assume that all densities are
upper semi-continuous and satisfy assumption \ref{assump}.

We begin with a deterministic result. The proof of the next lemma,
which is related to
the epi-convergence of mollifying approximates from \cite{VaAn}, is
included in the Appendix \ref{sec:appI}.

\begin{lemma} \label{hypoconv} 
Assume $f: \Theta \to \overline{\mathbb{R}}$
is an upper semi-continuous density. Let $s_{n}$ denote the volume of
the unit ball in $\mathbb{R}^{n}$. Define
 $$f^{\nu}(\theta) := \nu^{n} \cdot s_{n} \cdot \int_{\norm{\theta - z} < \frac{1}{\nu}} f(z) dz.$$
Then $f^{\nu}$ hypo-converges to $f$.
\end{lemma}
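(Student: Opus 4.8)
The plan is to verify hypo-convergence through the hit-and-miss characterization of Proposition \ref{HitMiss}, rather than the sequential definition, since it cleanly isolates the two genuinely different phenomena at play. Since $f^{\nu}(\theta)$ is the average of $f$ over the ball $B(\theta,1/\nu)$ (the normalizing constant being the reciprocal of that ball's volume, so that $f^{\nu}(\theta)\to f(\theta)$ for a.e.\ $\theta$ by the Lebesgue differentiation theorem recalled before the lemma), I would use two elementary facts repeatedly: (i) an average never exceeds the supremum over the region, so $f^{\nu}(\theta)\le \sup_{B(\theta,1/\nu)} f$; and (ii) at every point $\theta$ of the continuity set $C$ of Assumption \ref{assump} with $f(\theta)$ finite, the average converges to the value, $f^{\nu}(\theta)\to f(\theta)$, because continuity makes $|f-f(\theta)|$ uniformly small on a small enough ball. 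Fact (ii) is the crucial strengthening of the a.e.\ statement: convergence holds at \emph{every} continuity point, not merely almost everywhere.

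For the compact condition, $\limsup_\nu(\sup_B f^{\nu})\le \sup_B f$ with $B$ compact: by (i), for $\theta\in B$ the ball $B(\theta,1/\nu)$ lies in the closed neighborhood $B^{\nu}:=\{x:\,d(x,B)\le 1/\nu\}$, so $\sup_B f^{\nu}\le \sup_{B^{\nu}} f$. The sets $B^{\nu}$ decrease to $\bigcap_\nu B^{\nu}=B$, and I would show $\limsup_\nu \sup_{B^{\nu}} f\le \sup_B f$ using upper semicontinuity: choosing near-maximizers $x_\nu\in B^{\nu}$, the sequence stays in the bounded set $B^{1}$, so a subsequence converges to some $x^{\ast}\in B$, and usc gives $f(x^{\ast})\ge \limsup_\nu f(x_\nu)\ge \limsup_\nu \sup_{B^{\nu}} f$, whence $\sup_B f\ge \limsup_\nu \sup_{B^{\nu}} f$. (The case $\sup_B f=+\infty$ is vacuous.)

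The open condition, $\liminf_\nu(\sup_O f^{\nu})\ge \sup_O f$ for open $O$, is the crux, and is exactly where the naive pointwise argument of \eqref{limitEst} breaks: the supremum of $f$ over $O$ may be approached only at cusps or spikes where $f^{\nu}$ fails to converge to $f$. The resolution is the envelope construction \eqref{uscDef}. Fix $\alpha<\sup_O f$ and a point $\theta_0\in O$ with $f(\theta_0)>\alpha$. By \eqref{uscDef} the value $f(\theta_0)$ is a supremum of limsups of values taken on $C$, so there are continuity points $\theta_1\in O\cap C$ arbitrarily close to $\theta_0$ with $f(\theta_1)>\alpha$, and (after shrinking) with $f(\theta_1)$ finite. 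At such $\theta_1$, fact (ii) gives $f^{\nu}(\theta_1)\to f(\theta_1)>\alpha$, so $\sup_O f^{\nu}\ge f^{\nu}(\theta_1)>\alpha$ for all large $\nu$; hence $\liminf_\nu \sup_O f^{\nu}\ge \alpha$. Letting $\alpha\uparrow \sup_O f$ handles the finite case; when $\sup_O f=+\infty$, the same scheme with an arbitrary threshold $M$ in place of $\alpha$ (truncating $f$ at height $M$ and applying fact (ii) to the truncation if the chosen continuity point has infinite value) yields $\liminf_\nu \sup_O f^{\nu}\ge M$ for every $M$.

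The main obstacle is precisely this open-set inequality: pointwise a.e.\ convergence is too weak, because $\sup_O f$ can be witnessed on the measure-zero, possibly non-Lebesgue set where convergence fails. The key step is recognizing that Assumption \ref{assump} together with the upper semi-continuous envelope \eqref{uscDef} forces every function value to be approximable by values on the continuity set $C$, so that the supremum over any open set is already captured (up to any $\alpha<\sup_O f$) at points where the averaging converges. I expect verifying this density-of-$C$ step, together with the finiteness and infinite-value bookkeeping in fact (ii), to be the most delicate part, the compact-set inequality being a routine usc-plus-compactness argument.
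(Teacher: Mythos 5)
Your proof is correct, but it takes a genuinely different route from the paper's. The paper verifies the sequential definition of hypo-convergence directly: the upper (limsup) inequality for an arbitrary sequence $\theta^{\nu}\to\theta$ is obtained from upper semicontinuity of $f$ at $\theta$ by bounding the average over the shrinking ball, and the lower (liminf) inequality is obtained by explicitly constructing a recovery sequence through points of the continuity set $C$ via a diagonal extraction over moduli of continuity $\delta(y^{\nu},\eta)$. You instead verify the hit-and-miss criterion of Proposition \ref{HitMiss}. Both proofs rest on the same two ingredients --- upper semicontinuity for the upper bound, and density of $C$ together with the envelope definition \eqref{uscDef} for the lower bound --- but your open-set inequality replaces the paper's somewhat delicate diagonal construction with the cleaner observation that any level $\alpha<\sup_{O}f$ is already witnessed at a continuity point of $O$, where the averages genuinely converge; this shortens the argument and avoids attaching a convergent sequence to every individual $\theta$, at the price of invoking Proposition \ref{HitMiss} (which the paper cites anyway). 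Your compact-set inequality, via the enlarged sets $B^{\nu}$ shrinking to $B$ and a usc-plus-compactness extraction of near-maximizers, is a standard and correct substitute for the paper's pointwise limsup estimate. Two minor remarks: your truncation device for continuity points with infinite value is indeed needed, since Assumption \ref{assump} permits $f=\infty$ on a null set; and, as you implicitly note by calling $f^{\nu}$ an average, the normalizing constant should be the reciprocal $\nu^{n}/s_{n}$ of the volume of the $1/\nu$-ball rather than $\nu^{n}\cdot s_{n}$ --- a slip present in the paper's statement and proof alike, which both your argument and the paper's tacitly correct.
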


We note that even though upper semi-continuous densities may
take $\infty$ as a value, and hence need not be proper, $f^{\nu}$ must
be proper because $f$ integrates to one.

This lemma allows us to prove the following result.
\begin{theorem} \label{approxBayes}
If $\pi(\theta|x)$ is proper $\mathcal{X}$ almost surely, then any MAP 
estimator is a limit of approximate Bayes estimators.
\end{theorem}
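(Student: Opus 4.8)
The plan is to fix the observation $x$ and argue pointwise, so write $f := \pi(\cdot\,|x)$, an upper semi-continuous density that is proper by hypothesis, and $\bar\theta := \hat\theta_{MAP}(x)$, so that $f(\bar\theta) = \sup_\theta f(\theta) =: M$, a finite number by properness. Let $f^\nu$ be the mollified functions of Lemma~\ref{hypoconv}, which hypo-converge to $f$, and introduce the loss
\[
 V^\nu(\theta,z) := \nu^n s_n \, \mathbf{1}_{\{\norm{\theta-z}<1/\nu\}} .
\]
This is nonnegative, hence an admissible loss, and its posterior expectation is exactly $\int_{z} V^\nu(\theta,z)\,\pi(z|x)\,dz = f^\nu(\theta)$. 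Consequently an $\epsilon$-approximate Bayes estimator for $V^\nu$ is precisely an $\epsilon$-approximate maximizer of $f^\nu$, and the task reduces to producing approximate maximizers of $f^\nu$ that are anchored at the mode $\bar\theta$.

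The construction of that sequence is the core of the proof. Applying the first (inner) condition in the definition of hypo-convergence to $f^\nu \xrightarrow{hypo} f$ at the point $\bar\theta$ yields a sequence $\theta^\nu \to \bar\theta$ with $\liminf_\nu f^\nu(\theta^\nu) \ge f(\bar\theta) = M$. On the other hand, up to the normalization fixed in Lemma~\ref{hypoconv} each $f^\nu(\theta)$ is the mean value of $f$ over the ball $\{z : \norm{\theta - z} < 1/\nu\}$, so $f^\nu(\theta) \le M$ for all $\theta$ and all $\nu$; in particular $\sup_\theta f^\nu(\theta) \le M$. These two bounds force $f^\nu(\theta^\nu) \to M$ and $\sup_\theta f^\nu(\theta) \to M$, all limits finite because $M < \infty$. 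Setting $\hat\theta^\nu(x) := \theta^\nu$ and $\epsilon^\nu(x) := \sup_\theta f^\nu(\theta) - f^\nu(\theta^\nu) \ge 0$, the point $\hat\theta^\nu(x)$ is by construction an $\epsilon^\nu(x)$-approximate Bayes estimator for $V^\nu$, we have $\hat\theta^\nu(x) \to \bar\theta = \hat\theta_{MAP}(x)$, and $\epsilon^\nu(x) \to 0$, which is exactly the definition of $\hat\theta_{MAP}$ being a limit of approximate Bayes estimators.

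It is worth noting why this does not clash with the counterexample of Section~\ref{sec:4}. There the global maximizers of $f^\nu$ escape to infinity, because the sequence is not eventually level-bounded, so the exact Bayes estimators fail to approach $\bar\theta$; nevertheless the inner half of hypo-convergence always supplies a local near-maximizer sitting beside the mode, and the averaging bound $f^\nu \le M$ forces its optimality gap to vanish. This is precisely where properness is used: it keeps $M = \sup f$ finite, so that $\epsilon^\nu$ is a genuine difference of finite quantities converging to a common limit rather than an indeterminate $\infty - \infty$.

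The step I expect to be the main obstacle is measurability. The argument above is carried out separately for each $x$, whereas the statement concerns estimators---measurable maps $\mathcal{X} \to \Theta$---and asks for $\hat\theta^\nu \to \hat\theta_{MAP}$ almost surely together with $\epsilon^\nu \to 0$ $\mathcal{X}$-surely. The inner sequence produced abstractly by hypo-convergence carries no measurability in $x$. To repair this I would instead take $\hat\theta^\nu(x)$ to be a measurable near-maximizer of $\theta \mapsto f^\nu(\theta\,|x)$ over a closed ball $\bar B(\hat\theta_{MAP}(x), \delta_\nu)$, invoking a measurable selection theorem (the objective being usc in $\theta$, jointly measurable in $(\theta,x)$, and the feasible set compact), and then choose the radii $\delta_\nu \downarrow 0$ slowly enough that the optimality gap still tends to $0$. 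Making this selection and the rate of $\delta_\nu$ compatible is the one genuinely delicate point; the variational content is entirely contained in the two bounds of the second paragraph.
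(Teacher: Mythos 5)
Your proof is correct and follows essentially the same route as the paper's: extract the recovering sequence $\theta^{\nu}\to\hat\theta_{MAP}(x)$ from the hypo-convergence of Lemma~\ref{hypoconv}, bound $\sup_{\theta}f^{\nu}(\theta)\le f(\hat\theta_{MAP}(x))$ by the averaging property, and conclude $\epsilon^{\nu}(x)=\sup f^{\nu}-f^{\nu}(\theta^{\nu})\to 0$ using properness to keep the difference finite. Your explicit identification of the loss $V^{\nu}$ with the scaled indicator, and your remark on measurability of $x\mapsto\hat\theta^{\nu}(x)$, are refinements the paper glosses over (its definition of an estimator does not actually demand measurability), but they do not change the argument.
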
 
\begin{proof}
Let $\hat{\theta}_{MAP}$ be a maximum a posteriori estimator of
$\theta$. By definition, $\hat{\theta}_{MAP}(x) \in \argmax
\pi(\theta|x)$. Let $x$ be any element in
$\mathcal{X}$. For ease of notation, we will drop the explicit
dependence on $x$ by writing $\hat{\theta}_{MAP}: =
\hat{\theta}_{MAP}(x)$ and letting $f(\theta):=\pi(\theta|x)$. By lemma
\ref{hypoconv}, $f^{\nu} \xrightarrow{hypo} f$. From the definition of
hypo-convergence, there is a sequence $\theta^{\nu}(x) \to
\hat{\theta}_{MAP}(x)$ such that $\limsup_{\nu} f^{\nu}(\theta^{\nu})
\leq f(\hat{\theta}_{MAP})$. Also directly from the definition,
$\liminf_{\nu} f^{\nu}(\theta^{\nu}) \geq f(\hat{\theta}_{MAP})$. Hence
$\lim_{\nu} f^{\nu}(\theta^{\nu}) = f(\hat{\theta}_{MAP})$. 

Repeating this construction pointwise for each $x \in \mathcal{X}$, we define a
sequence of estimators with $\hat{\theta}^{\nu}(x)$ the
$\theta^{\nu}$ sequence above. Define $\epsilon^{\nu}(x)$ as
$\sup_{\theta \in \Theta} f^{\nu}(\theta) - f^{\nu}(\theta^{\nu})$. 
We claim that $L^{\nu}$,
$\hat{\theta}^{\nu}$, and $\epsilon^{\nu}$ satisfy the conditions so
that $\hat{\theta}_{MAP}$ is an approximate Bayes estimator. We must
verify the three conditions in the definition. The first two, that
$\hat{\theta}^{\nu}$ converges almost surely to $\hat{\theta}_{MAP}$
and that $\hat{\theta}^{\nu}$ is an $\epsilon^{\nu}$-approximate Bayes
estimator, are true by construction. Lastly, we must show that $\epsilon^{\nu}
\to 0$ almost surely. By monotonicity of the integral, we know that
$$f^{\nu}(\theta^{\nu}) \leq \sup_{\theta} f^{\nu}(\theta) \leq
f(\hat{\theta}_{MAP}).$$
For each $x\in \mathcal{X}$ with $\pi(\theta|x)$ proper, we combine
this inequality with the fact that
$\lim_{\nu} f^{\nu}(\theta^{\nu}) = f(\hat{\theta}_{MAP})$ to arrive
at the following
$$0 \leq \epsilon^{\nu}(x) = \sup f^{\nu} - f^{\nu}(\theta^{\nu}) \leq f(\hat{\theta}_{MAP}) -
f^{\nu}(\theta^{\nu}) \to 0$$
Note that $\pi(\theta|x)$ must be proper for this to hold: it
guarantees that $f(\hat{\theta}_{MAP}) < \infty$, and hence
$f(\hat{\theta}_{MAP}) = \lim_{\nu} f^{\nu}(\theta^{\nu})$ gives
$f(\hat{\theta}_{MAP}) - \lim_{\nu} f^{\nu}(\theta^{\nu}) =0$.

We conclude that $\epsilon^{\nu}(x) \to 0$ almost everywhere, since
$\pi(\theta|x)$ is proper $\mathcal{X}$ almost everywhere. This shows
the third condition in the definition, so that $\hat{\theta}_{MAP}$ is
a limit of approximate Bayes estimators.\qed
\end{proof}

The notion of an approximate Bayes
estimator captures the idea that Bayes and MAP estimators are close
in terms of evaluation of the objective. They need not be close in
distance to each other in $\Theta$. This point is subtle, and
underlies the confusion in the incorrect claim in \eqref{limitEst}.
In fact, as the counterexample 
in the previous section shows, Theorem \ref{approxBayes}
cannot be strengthened from approximate to true Bayes estimators
without additional conditions.

We turn now to providing those conditions, in a sort of converse to theorem
\ref{approxBayes}. We turn our focus to characterizing when a sequence of Bayes
estimators converges to a MAP estimator. We again begin with a deterministic
result.

\begin{theorem} \label{dettheorem}
Assume there exists $\alpha$ such that $\lev_{\geq \alpha}
f$ is bounded and has nonempty interior.  Then 
 $\argmax f^{\nu}$ is eventually nonempty, and
any sequence maximizers of $f^{\nu}$ as $\nu \to \infty$ has a 
maximizer of $f$ as a limit point.
\end{theorem}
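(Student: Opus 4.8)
The plan is to verify, by hand, a localized version of the standard variational result for maximizers (in the spirit of \cite[7.33]{VaAn}), using the hypo-convergence supplied by Lemma \ref{hypoconv} together with the hit-and-miss characterization in Proposition \ref{HitMiss}. I would not invoke \cite[7.33]{VaAn} directly, since here $f$ need not be proper (it may take the value $\infty$) and the sequence $f^{\nu}$ need not be eventually level-bounded at every height; the hypothesis controls only the top level sets. So the first task is to convert the two geometric assumptions on $\lev_{\ge\alpha} f$ into quantitative bounds on $f^{\nu}$.

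First I would record that boundedness of $\lev_{\ge\alpha} f$ forces $\alpha>0$, since $f\ge 0$ would otherwise make $\lev_{\ge\alpha} f$ all of $\mathbb{R}^{n}$; then fix $R$ with $\lev_{\ge\alpha} f\subseteq B(0,R)$ and, using the nonempty interior, a ball $B(\theta_{0},\rho)\subseteq\lev_{\ge\alpha} f$ on which $f\ge\alpha$. The crux is a two-sided estimate meeting at the common threshold $\alpha s_{n}^{2}$. Far from the origin, for $\|\theta\|>R+1$ the averaging ball $\{z:\|\theta-z\|<1/\nu\}$ misses $\lev_{\ge\alpha} f$, so $f<\alpha$ there and $f^{\nu}(\theta)<\alpha s_{n}^{2}$; near $\theta_{0}$, once $1/\nu<\rho$ the averaging ball sits inside $B(\theta_{0},\rho)$, so $f\ge\alpha$ there and $f^{\nu}(\theta_{0})\ge\alpha s_{n}^{2}$. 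Since $f^{\nu}$ is continuous (a convolution of the integrable $f$ with a bounded compactly supported kernel), this sandwich shows that for all large $\nu$ the supremum of $f^{\nu}$ is attained inside the compact set $\overline{B(0,R+1)}$ and strictly exceeds every far-field value. This simultaneously gives that $\argmax f^{\nu}$ is eventually nonempty and that $\argmax f^{\nu}\subseteq\overline{B(0,R+1)}$.

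With the maximizers trapped in a fixed compact set, any sequence $\theta^{\nu}\in\argmax f^{\nu}$ is bounded and hence has a cluster point; extract $\theta^{\nu_{k}}\to\bar\theta$. To see $\bar\theta\in\argmax f$, I would combine two inequalities. From the compact-set half of Proposition \ref{HitMiss} applied to $\overline{B(\bar\theta,\delta)}$ and upper semicontinuity of $f$, letting $\delta\downarrow 0$ gives $\limsup_{k} f^{\nu_{k}}(\theta^{\nu_{k}})\le f(\bar\theta)$. From the open-set half applied to $O=\mathbb{R}^{n}$ gives $\liminf_{\nu}(\sup f^{\nu})\ge\sup f$. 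Since each $\theta^{\nu_{k}}$ is a maximizer, $f^{\nu_{k}}(\theta^{\nu_{k}})=\sup f^{\nu_{k}}$, and chaining the inequalities yields $f(\bar\theta)\ge\sup f$, hence equality. This argument is insensitive to whether $\sup f$ is finite, so the possibly-improper case is handled automatically.

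I expect the main obstacle to be the level-boundedness step, i.e. the two-sided estimate that pins the maximizers inside $\overline{B(0,R+1)}$. This is precisely where the hypotheses do real work: it is exactly the failure of such a uniform sandwich---bumps of height approaching the central mode escaping to infinity---that drives the counterexample of Section \ref{sec:4}, so the argument must genuinely use both the boundedness and the nonempty interior of $\lev_{\ge\alpha} f$, the former to keep the far field below $\alpha s_{n}^{2}$ and the latter to push $\sup f^{\nu}$ up to $\alpha s_{n}^{2}$. A secondary care point is the bookkeeping with extended-real values and with the normalizing constant $s_{n}$ when comparing the near- and far-field estimates.
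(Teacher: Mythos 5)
Your proof is correct and follows essentially the same route as the paper's: both arguments use the two-sided ball estimates on $\lev_{\ge \alpha} f$ to trap $\argmax f^{\nu}$ in a fixed compact neighborhood of that level set, and then combine Lemma \ref{hypoconv} with Proposition \ref{HitMiss} to show $\sup f^{\nu} \to \sup f$ and that every cluster point of maximizers maximizes $f$. The only differences are in execution: you get continuity of $f^{\nu}$ from absolute continuity of the integral where the paper settles for upper semicontinuity via Fatou, you track the normalizing constant consistently on both sides of the threshold, and your derivation of $\limsup_{k} f^{\nu_{k}}(\theta^{\nu_{k}}) \le f(\bar\theta)$ via shrinking compact balls is a slightly more careful rendering of the paper's final upper-semicontinuity step.
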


\begin{proof}
Let $f$ be an usc density. Assume there is an $\alpha \in \mathbb{R}$
such that the upper level set
$\lev_{\geq \alpha} f$ is bounded and has nonempty interior.  Note
that $\lev_{\geq \alpha}$ is
compact by the Heine-Borel Theorem because upper level sets are closed
when $f$ is upper semicontinuous. 

First, we show that there is an
index $\nu_{0}$ such that $\argmax f^{\nu}$ is nonempty 
 for each $\nu > \nu_{0}$. Then we show that
any sequence of maximizers of $f^{\nu}$, has as a
limit point a maximizer of $f$.

To show the existence of a maximizer, we first show that $f^{\nu}$ is
upper semi-continuous with a bounded, nonempty upper level set. That
$f^{\nu}$ attains its maximum then follows because an upper
semi-continuous function attains its maximum on a compact set. We
recall from section \ref{sec:3} that our framework allows that
this maximum could be $\infty$.

Fix $\nu \in \mathbb{N}$ and  $\theta \in \mathbb{R}^{n}$ Let
$\{ \theta^{\eta} \}_{\eta \in \mathbb{N}}$ be any sequence that converges to
$\theta$.
To show upper semicontinuity we must have 
$$\limsup_{\eta} f^{\nu}(\theta^{\eta}) \leq f^{\nu}(\theta),$$
but this follows from the following chain of inequalities
\begin{align*}
\limsup_{\eta} f^{\nu}(\theta^{\eta})
&= \limsup_{\eta} s_{n} \cdot {\nu}^{n} \cdot \int_{\norm{\theta^{\eta}-z} < \frac{1}{\nu}} f(z) \, dz \\
&= \limsup_{\eta} s_{n} \cdot \nu^{n} \cdot \int_{\norm{z} < \frac{1}{\nu}} f(z+\theta^{\eta}) \, dz \\
&\leq s_{n} \cdot \nu^{n} \cdot \int_{\norm{z} < \frac{1}{\nu}} \limsup_{\eta} f(z+\theta^{\eta}) \, dz\\
&\leq s_{n} \cdot \nu^{n} \cdot \int_{\norm{z} < \frac{1}{\nu}} f(z+\theta) \, dz \\
&= f^{\nu}(\theta).
\end{align*}
The first inequality follows from Fatou's lemma, and the second from
the upper semicontinuity of $f$. Hence $f^{\nu}$ is upper
semicontinuous.

The family of functions $f^{\nu}$ has a bounded upper level set because $\lev_{\geq \alpha} f$
bounded with constant $M$ implies that $\lev_{\geq \alpha} f^{\nu}$ is
bounded with constant $M+\frac{1}{\nu}$. Lastly, we show that the
upper level set $\lev_{\geq \alpha} f^{\nu}$ is nonempty. Let $\theta \in
\setint \lev_{\geq \alpha} f^{\nu}$, and denote by $\delta$ radius
such that $\mathbb{B}(\theta, \delta) \subseteq \setint \lev_{\geq
\alpha} f^{\nu}$. Choose $\nu_{0} \geq
\frac{1}{\delta}$. Then for any $\nu > \nu_{0}$
$$f^{\nu}(\theta) = s_{n} \cdot \nu^{n} \int_{\norm{\theta-z}< \frac{1}{\nu}} f(z) \, dz \geq
s_{n} \cdot \nu^{n} \cdot \alpha \cdot \frac{1}{\nu^n} \cdot s_{n} = \alpha.$$
So $\theta \in \lev_{\geq \alpha} f^{\nu}$. We conclude that this level set
is nonempty and bounded, so that $f^{\nu}$ attains its maximum.

Now let $\hat{\theta}^{\nu}$ be any sequence of maximizers of
$f^{\nu}$. For $\nu > \nu_{0}$, 
$\hat{\theta}^{\nu} \in \lev_{\geq \alpha} f^{\nu}$.
From Lemma \ref{hypoconv} and Proposition \ref{HitMiss}
$$\liminf_{\nu} f^{\nu}(\hat{\theta}^{\nu}_{B}) = \liminf_{\nu}
\sup_{\mathbb{R}^{n}} f^{\nu} \geq \sup_{\mathbb{R}^{n}} f^{\nu}$$
and
\begin{align*}
& \limsup_{\nu} f^{\nu}(\hat{\theta}^{\nu}) \\
&= \limsup_{\nu}(\sup_{\mathbb{R}^{n}} f^{\nu}) \\
&=\limsup_{\nu} \left( \sup_{\overline{\mathbb{B}}(0,M+1)} f^{\nu}
\right) \\ 
&\leq \sup_{\overline{\mathbb{B}}(0, M+1)} f \\ 
&= \sup_{\mathbb{R}^{n}} f.
\end{align*}

So $\lim f^{\nu}(\hat{\theta}^{\nu}_{B}) = \sup_{\mathbb{R}^{n}} f.$

Since $\hat{\theta}^{\nu}$ is eventually in $\overline{\mathbb{B}}(0,
M+1)$, which is compact, it has a convergent subsequence $\hat{\theta}^{\nu_{k}} \to
\tilde{\theta}$. By upper semi-continuity
 $$f(\tilde{\theta}) \geq \limsup f(\hat{\theta}^{\nu_{k}}) = \lim
f(\hat{\theta}^{\nu}).$$
Hence $\tilde{\theta}$ maximizes $f$. Furthermore, by upper
semi-continuity any limit point of $\hat{\theta}^{\nu}$ maximizes $f$.
This proves the theorem.\qed
\end{proof}

We now prove our main result about the relationship between estimators.

\begin{theorem} \label{maintheorem}
Suppose that for each $x \in \mathcal{X}$, 
$\pi(\theta |x)$ satisfies the following property:
There exists $\alpha$ such that 
$$\left \{ \theta : \pi(\theta |x) > \alpha  \right \}$$
is bounded and has nonempty interior. Then for any of sequence of Bayes
estimators $\hat{\theta}_{B}^{\nu}$ with $0$-$1$ loss $L^{\nu}$
\begin{enumerate}[(i)]
\item There is a MAP estimator $\hat{\theta}_{MAP}$ such that
$\hat{\theta}_{MAP}(x)$ is a limit
point of $\hat{\theta}_{B}^{\nu}(x)$ for each $x$.
\item Every limit point of $\hat{\theta}_{B}^{\nu}(x)$ defines a
MAP estimator, in the sense that there is a MAP estimator
$\hat{\theta}_{MAP}$ with
$\hat{\theta}_{MAP}(x)$ equal to that limit point.
\end{enumerate}
\end{theorem}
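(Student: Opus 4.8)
The plan is to reduce the statement to the deterministic Theorem \ref{dettheorem} applied pointwise in $x$. Fix $x \in \mathcal{X}$ and write $f(\theta) := \pi(\theta|x)$, which is an upper semi-continuous density by Assumption \ref{assump}. The first observation is that the Bayes estimators are exactly the maximizers of the mollified functions $f^{\nu}$ from Lemma \ref{hypoconv}: since $f^{\nu}(\theta) = \nu^{n} s_{n} \int_{\norm{\theta - z} < 1/\nu} f(z)\,dz$ differs from the objective in \eqref{min2max} only by the positive constant $\nu^{n} s_{n}$, we have $\hat{\theta}_{B}^{\nu}(x) \in \argmax_{\theta} f^{\nu}(\theta)$. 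Thus a limit point of the Bayes estimators at $x$ is precisely a limit point of a sequence of maximizers of $f^{\nu}$, which is exactly the object controlled by Theorem \ref{dettheorem}.

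The main obstacle is that Theorem \ref{dettheorem} requires a \emph{closed} upper level set $\lev_{\geq \alpha'} f$ that is bounded with nonempty interior, whereas the hypothesis here is phrased with the \emph{strict} level set $\{\theta : f(\theta) > \alpha\}$. I would bridge this gap with a Baire category argument. Let $U \subseteq \{\theta : f(\theta) > \alpha\}$ be a nonempty open ball, which exists by the nonempty-interior hypothesis. For $k \in \mathbb{N}$ set $U_{k} := U \cap \lev_{\geq \alpha + 1/k} f$; each $U_{k}$ is relatively closed in $U$ because $f$ is usc, and $\bigcup_{k} U_{k} = U$ since $f > \alpha$ throughout $U$. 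As a nonempty open subset of $\mathbb{R}^{n}$, $U$ is a Baire space, so the $U_k$ cannot all be nowhere dense; being relatively closed, some $U_{k_{0}}$ therefore has nonempty interior. Setting $\alpha' := \alpha + 1/k_{0} > \alpha$ yields a level set $\lev_{\geq \alpha'} f$ with nonempty interior; moreover $\lev_{\geq \alpha'} f \subseteq \{\theta : f(\theta) > \alpha\}$ is bounded, and is closed by upper semicontinuity, hence compact. This is exactly the hypothesis needed to invoke Theorem \ref{dettheorem}.

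With $\alpha'$ in hand, Theorem \ref{dettheorem} gives, for this fixed $x$, that $\argmax f^{\nu}$ is eventually nonempty (so the Bayes estimators are eventually well defined) and that every limit point of any sequence of maximizers of $f^{\nu}$ --- in particular of $\hat{\theta}_{B}^{\nu}(x)$ --- maximizes $f = \pi(\cdot|x)$, i.e. is a posterior mode. Since the Bayes estimators eventually lie in the fixed compact set $\lev_{\geq \alpha'} f$ (up to the $1/\nu$ enlargement used in Theorem \ref{dettheorem}), the sequence $\hat{\theta}_{B}^{\nu}(x)$ has at least one limit point. This establishes the pointwise content of both (i) and (ii).

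It remains to package these pointwise facts as statements about estimators, i.e. functions of $x$. For (i), for each $x$ I would select one limit point of $\hat{\theta}_{B}^{\nu}(x)$ and define $\hat{\theta}_{MAP}(x)$ to be that value; by the previous paragraph this value maximizes $\pi(\cdot|x)$, so $\hat{\theta}_{MAP}$ is a MAP estimator whose value is a limit point of the Bayes estimators at every $x$. For (ii), given any limit point $\tilde{\theta}$ of $\hat{\theta}_{B}^{\nu}(x)$, the same argument shows $\tilde{\theta} \in \argmax \pi(\cdot|x)$; defining an estimator equal to $\tilde{\theta}$ at this $x$ and to any posterior mode elsewhere --- modes exist everywhere because each $\lev_{\geq \alpha'} \pi(\cdot|x)$ is compact and nonempty and $\pi(\cdot|x)$ is usc --- exhibits $\tilde{\theta}$ as the value of a MAP estimator. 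The only delicate point in this packaging is measurability of the selected estimator, which is consistent with the paper's treatment of estimators as arbitrary functions $\mathcal{X} \to \Theta$.
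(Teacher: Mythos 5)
Your proposal is correct and follows the same overall strategy as the paper: fix $x$, identify the Bayes estimators with maximizers of the mollified functions $f^{\nu}$ of Lemma \ref{hypoconv}, invoke the deterministic Theorem \ref{dettheorem}, and then define the MAP estimator pointwise from the resulting limit points, exactly as in parts (i) and (ii). The one place you genuinely go beyond the paper is the Baire category step. The paper's proof simply asserts that the hypothesis of Theorem \ref{maintheorem} (the \emph{strict} level set $\{\theta : \pi(\theta|x) > \alpha\}$ bounded with nonempty interior) puts us in the setting of Theorem \ref{dettheorem} (which asks for a level set $\lev_{\geq \alpha'}$ that is bounded with nonempty interior); these are not literally the same hypothesis, since $\lev_{\geq \alpha}\pi(\cdot|x)$ may be unbounded even when the strict level set is bounded, and passing to some $\alpha' > \alpha$ restores boundedness but not obviously the nonempty interior. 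Your argument --- writing an open ball $U \subseteq \{\theta : \pi(\theta|x) > \alpha\}$ as the countable union of the sets $U \cap \lev_{\geq \alpha + 1/k}\pi(\cdot|x)$, which are relatively closed by upper semicontinuity, and applying Baire to extract a $k_{0}$ for which the corresponding set has nonempty interior --- correctly supplies the missing $\alpha'$, so your write-up is tighter than the paper's at this point. Everything else (the identification of $\argmax f^{\nu}$ with the Bayes estimators, the compactness argument giving existence of limit points, and the pointwise packaging into estimators) matches the paper's proof.
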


\begin{proof}
Fix $x \in \mathcal{X}$. By assumption, 
$\pi(\theta|x)$ has a bounded level set with nonempty interior.
From the deterministic result, Theorem \ref{dettheorem}, the sequence
$\hat{\theta}_{B}^{\nu}(x)$ has a limit point $\tilde{\theta}(x)$,
and this limit point maximizes $\pi(\theta|x)$. Define a MAP estimator
$\hat{\theta}_{MAP}(x) := \tilde{\theta}(x)$ pointwise for each $x \in
\mathcal{X}$. This proves (i). 

For (ii), the result follows
since the method of defining a MAP estimator in the previous paragraph
is valid for every limit point of $\hat{\theta}^{\nu}_{B}(x)$.\qed
\end{proof}

As a consequence of the proof, we note the following: if
$\hat{\theta}_{B}^{\nu}$ converges to some estimator $\hat{\theta}$
almost surely, then $\hat{\theta}$ is a MAP estimator.

%\begin{corollary}\label{limitcond}
%Assume that
%$$\lim_{\norm{x} \to \infty} f(x) = 0.$$
%Then the conclusion of theorem \ref{maintheorem} hold.
%\end{corollary}

We next establish related results for various shape constrained
densities. Recall that a function $f: \mathbb{R}^{n} \to \overline{\mathbb{R}}$ is
quasiconcave if for each $x, y \in \mathbb{R}^{n}$ and $\lambda \in
[0,1]$ 
$$f(\lambda x + (1-\lambda) y) \geq \min \{ f(x), f(y) \}.$$
For a quasiconcave function, local maximality implies global
maximality. This shape constraint captures the notion of a
probability distribution having a ``single peak''. It is also
sometimes referred to as unimodality, but we avoid this terminology
because it has multiple meanings in the literature. 

\begin{theorem}\label{quasiconcave}
Assume that for each $x \in \mathcal{X}$, $\pi(\theta|x)$ is quasiconcave.
Then the conclusion of Theorem \ref{maintheorem} holds.
\end{theorem}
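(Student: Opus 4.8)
The plan is to deduce Theorem \ref{quasiconcave} from Theorem \ref{maintheorem} by verifying that quasiconcavity, together with the standing assumption that $\pi(\theta|x)$ is a usc density, supplies the level-set hypothesis of Theorem \ref{maintheorem}: namely, that for each $x$ there exists $\alpha$ such that $\{\theta : \pi(\theta|x) > \alpha\}$ is bounded and has nonempty interior. Once that property is established, the conclusion is immediate. So the entire content of the proof is the geometric/measure-theoretic claim that a quasiconcave usc density has a bounded super-level set with nonempty interior.

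First I would fix $x$ and write $f(\theta) := \pi(\theta|x)$. For quasiconcavity, every super-level set $\{f > \alpha\}$ (and $\{f \geq \alpha\}$) is convex. The existence of \emph{some} super-level set with nonempty interior is easy: since $f$ integrates to $1$, it cannot be $0$ almost everywhere, so $\sup f > 0$; picking $0 < \alpha < \sup f$ gives a nonempty super-level set, and a nonempty convex set in $\R^n$ has nonempty relative interior. The genuine obstacle is ruling out the degenerate case where this convex super-level set is flat (lower-dimensional, hence empty interior) or unbounded. I expect the main work to be showing boundedness: I would argue that if every set $\{f > \alpha\}$ with $\alpha > 0$ were unbounded, then by convexity each such set would contain a ray or a sequence escaping to infinity along which $f \geq \alpha > 0$, and combining this across a fixed positive threshold with the convex (in particular, ``fattening'') structure would force $\int f \, d\mu = \infty$, contradicting that $f$ is a density. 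The precise mechanism is that a convex set of infinite ``width'' carrying a uniformly positive density has infinite measure times that positive floor.

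Concretely, I would proceed as follows. Choose $\alpha_0$ with $0 < \alpha_0 < \sup f$ so that $C := \{f \geq \alpha_0\}$ is nonempty, closed (usc), and convex. If $C$ has empty interior it lies in an affine hyperplane and hence has Lebesgue measure zero; since $f < \alpha_0$ off a measure-zero set would still be fine, I would instead descend to a smaller threshold: because $f$ is usc with $\sup f > 0$, and the super-level sets are nested convex sets whose union over $\alpha \downarrow 0$ is $\{f > 0\}$, the set $\{f>0\}$ is convex with positive measure (as $\int f = 1$), hence has nonempty interior of full dimension $n$; therefore for small enough $\alpha > 0$ the convex set $\{f > \alpha\}$ already has nonempty interior. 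This handles the interior condition. For boundedness I would take the threshold $\alpha = 1/k$ and suppose for contradiction that $\{f > 1/k\}$ is unbounded for every $k$; fixing one unbounded convex super-level set $S = \{f \geq \beta\}$ with $\beta > 0$ and nonempty interior, convexity implies $S$ contains a translate of a cone or an infinite slab of positive $n$-dimensional measure, whence $\int_S f \, d\mu \geq \beta \cdot \mu(S) = \infty$, the desired contradiction.

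The step I expect to be the main obstacle is making the boundedness argument fully rigorous, since an unbounded convex set need not contain a full cone of positive measure — it could be, for instance, an unbounded convex set that is ``thin at infinity.'' The clean fix is to use that $S$ has nonempty interior: pick an open ball $B(\theta_0, r) \subseteq S$ and an unbounded direction, i.e. a sequence $\theta_j \in S$ with $\norm{\theta_j} \to \infty$. By convexity the set $S$ contains the convex hull of $B(\theta_0,r) \cup \{\theta_j\}$, and the union over $j$ of these hulls is an unbounded convex set containing a ball, which contains cones of aperture bounded below and hence has infinite Lebesgue measure. Since $f \geq \beta > 0$ on all of $S$, this yields $\int f \, d\mu = \infty$, contradicting $\int f \, d\mu = 1$. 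Therefore some super-level set $\{f > \alpha\}$ is bounded with nonempty interior, the hypothesis of Theorem \ref{maintheorem} is met at $x$, and since $x$ was arbitrary the conclusion of Theorem \ref{maintheorem} follows verbatim.
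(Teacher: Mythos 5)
Your proof is correct and follows essentially the same route as the paper's: extract a super-level set of positive measure from $\int \pi(\theta|x)\,d\theta = 1$, use quasiconcavity to get convexity, conclude nonempty interior because a convex set with empty interior lies in a hyperplane and hence has measure zero, and prove boundedness by the convex-hull-of-a-ball-and-an-escaping-sequence argument forcing infinite integral. The only differences are cosmetic (your detour through $\{f>0\}$ for the interior step, and your explicit handling of the ``thin at infinity'' worry, which the paper also resolves with the same $\conv(B \cup \{\theta^{\nu}\})$ device).
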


\begin{proof}
For each $x$, since $\int \pi(z|x) \, dz = 1$, there is an $\alpha >0$
such that $\mu(\{\theta:\pi(\theta|x) \geq \alpha \})$ has positive measure. By
quasiconvexity, $\lev_{\geq \alpha} \pi(\theta|x) = \{\theta |
\pi(\theta|x) \geq \alpha \}$ is convex. 

Note $\lev_{\geq \alpha}$ having an interior point is equivalent to
the set containing $n$ affinely independent points.
Suppose, towards a contradiction, that  $\lev_{\geq \alpha} \pi(\theta|x)$ does
not contain any interior points, then its affine hull lies in an $n-1$ dimensional
 plane. This contradicts the set having positive measure. Hence
$\{\theta : \pi(\theta|x)\}$ must have an interior point.

In order to apply Theorem \ref{maintheorem} we must also show that the level
set $\lev_{\geq \alpha} \pi(\theta|x)$ is bounded. Fix $B$ to be any n-dimensional ball in $\lev_{\geq
\alpha} \pi(\theta|x)$. If $\theta^{\nu}$ were a sequence in $\lev_{\geq
\alpha} \pi(\theta|x)$ such that $\norm{\theta^{\nu}} \to \infty$,
then
$$\int_{z \in \Theta} \pi(z |x) \, dz \geq \int_{z \in \lev_{\geq
\alpha} \pi(\theta |x)} \pi(z | x) \, dz \geq \mu(\{\conv(B
\cup \{\theta^{\nu} \})\}) \cdot \alpha$$
Here, $\mu$ denotes Lebesgue measure. One can easily show that $\mu(\conv(B \cup \{\theta^{\nu} \})) \to
\infty$ when $\theta^{\nu} \to \infty$. This contradicts that $\int
\pi(z|x) dz = 1$. Hence there cannot exist such
a sequence, so the level set is bounded and the result is proven.\qed
\end{proof}

The following corollary about log-concave densities follows
immediately. Recall
that a function $f: \mathbb{R}^n \to \mathbb{R}$ is log-concave if
for all $x,y \in \mathbb{R}^{n}$ and $\lambda \in [0,1]$
$$f(\lambda x + (1-\lambda)y) \geq f(x)^{\lambda}
f(y)^{1-\lambda}$$
Log-concave densities have appeared in recent work of
\cite{rufibach07}, \cite{dumbgen09} due to their attractive
computational and theoretical properties in nonparametric 
density estimation.  

\begin{corollary}
Assume that for each $x \in \mathcal{X}$, $\pi(\theta|x)$ is log-concave. 
Then the conclusion of theorem \ref{maintheorem} holds.
\end{corollary}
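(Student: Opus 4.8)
The plan is to deduce this corollary from Theorem \ref{quasiconcave} by showing that every log-concave density is quasiconcave. Since Theorem \ref{quasiconcave} already establishes that quasiconcavity of $\pi(\theta|x)$ (for each $x$) implies the conclusion of Theorem \ref{maintheorem}, it suffices to verify that log-concavity is a strictly stronger shape constraint. So the entire content of the proof reduces to the pointwise inequality relating the two definitions.

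First I would fix $x \in \mathcal{X}$, arbitrary $\theta_1, \theta_2 \in \mathbb{R}^n$, and $\lambda \in [0,1]$, and write $g := \pi(\cdot|x)$ for brevity. The log-concavity hypothesis gives
\[
g(\lambda \theta_1 + (1-\lambda)\theta_2) \geq g(\theta_1)^{\lambda}\, g(\theta_2)^{1-\lambda}.
\]
The key observation is that the weighted geometric mean on the right is bounded below by the minimum of its two arguments: since $g \geq 0$, we have $g(\theta_1)^{\lambda} g(\theta_2)^{1-\lambda} \geq \min\{g(\theta_1), g(\theta_2)\}^{\lambda} \min\{g(\theta_1),g(\theta_2)\}^{1-\lambda} = \min\{g(\theta_1), g(\theta_2)\}$. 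Chaining these two inequalities yields exactly the quasiconcavity condition $g(\lambda\theta_1 + (1-\lambda)\theta_2) \geq \min\{g(\theta_1), g(\theta_2)\}$, which holds for every such $\theta_1, \theta_2, \lambda$. Hence $\pi(\theta|x)$ is quasiconcave for each $x$.

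With quasiconcavity in hand, I would simply invoke Theorem \ref{quasiconcave} to conclude that the conclusion of Theorem \ref{maintheorem} holds, completing the proof. I anticipate no serious obstacle here: the only mild care needed is in handling boundary values of $g$ (the cases where $g(\theta_1)$ or $g(\theta_2)$ equals $0$), but the geometric-mean-versus-minimum inequality remains valid there, since if either value is $0$ the minimum is $0$ and the inequality is trivial. One small stylistic note is that the corollary as stated assumes $f$ real-valued in the definition of log-concavity, so strictly speaking one should remark that the densities here may take the value $\infty$; the same geometric-mean bound extends to the extended reals under the usual conventions, so the argument is unaffected.
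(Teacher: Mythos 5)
Your proposal is correct and follows the same route as the paper, which simply notes that log-concavity implies quasiconcavity and invokes Theorem \ref{quasiconcave}; you merely spell out the geometric-mean-versus-minimum inequality that the paper leaves implicit. No issues.
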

\begin{proof}
Log-concavity implies quasiconcavity, and the result follows
immediately from the previous theorem.\qed
\end{proof}

%%%%%%%%%%%%%%%%%%%%%%%%%%%%%%%%%%%%%%%%%%%%%%%%%%%%%%%%%%%%%%%%%%%%%%%%%%%%%%%%%%%%%%%%%%%%%%%%%

% BibTeX users please use one of
%\bibliographystyle{spbasic}      % basic style, author-year citations
%\bibliographystyle{spmpsci}      % mathematics and physical sciences
%\bibliographystyle{spphys}       % APS-like style for physics
\bibliographystyle{plain}
\bibliography{BAMAP}   % name your BibTeX data base

\appendix
\section{Proof of Lemma 1}\label{sec:appI}

In this appendix we provide the proof of Lemma \ref{hypoconv}.

\begin{proof}
%\emph{Proof of Lemma \ref{hypoconv}. }
Let $\theta \in \mathbb{R}^{n}$. To show hypo-convergence, we must
show
that for each sequence $\theta^{\nu} \to \theta$, $\limsup_{\nu}
f^{\nu}(\theta^{\nu}) \leq f(x)$ and that there exists a sequence
$\theta^{\nu} \to \theta$ with $\liminf_{\nu} f^{\nu}(\theta^{\nu})
\geq f(\theta)$.

Fix $\epsilon >0$. Since $f$ is upper semi-continuous at $\theta$,
there
is a $\delta >0$ such that $\norm{z-\theta} < 2 \delta$ gives
$f(z) - f(\theta) < \epsilon$. 

Consider any sequence $\theta^{\nu} \to \theta$. We have that
$$f^{\nu}(\theta^{\nu})- f(\theta) = 
s_{n} \cdot \nu^{n} \cdot \int_{\norm{\theta^{\nu}-z}> \frac{1}{\nu}} (f(z) -
f(\theta)) dz = s_{n} \cdot \nu^{n} \cdot \int_{\norm{z} < \frac{1}{\nu}} 
(f(z+\theta^{\nu})-f(\theta)) \, dz.$$
Choose $\nu_{0} \in \mathbb{N}$ so that
$\norm{\theta-\theta^{\nu}}<\delta$ and
$\frac{1}{\nu} < \delta$ for each $\nu > \nu_{0}$. Then for any $\nu >
\nu_{0}$,
$$s_{n} \cdot \nu^n \cdot \int_{\norm{z} < \frac{1}{\nu}} \left(f(z+\theta^{\nu})-f(\theta) \right) \, dz
\leq s_{n} \cdot \nu^n \cdot \epsilon \cdot \int_{\norm{z} < \frac{1}{\nu}} \, dz = \epsilon.$$
Thus $\limsup_{\nu} f^{\nu}(\theta^{\nu}) \leq f(\theta)$.

To establish the second part of the hypo-convergence definition, we focus
our attention on constructing a sequence that satisfies the required
inequality.

Consider any $\eta \in \mathbb{N}$. Recall that $f^{\nu}$ is an upper
semi-continuous density. Let $C$ be the set where $f$ is continuous.
Because $C$ is dense, for each $\nu \in \mathbb{N}$, there is a
$y^{\nu} \in C$ such that $\norm{y^{\nu}-x} < \frac{1}{\nu}$.
Furthermore, $y^{\nu} \in C$ means that there is a
$\delta(y^{\nu},\eta) >0$ such that any $z \in \Theta$ which satisfies
 $\norm{y^{\nu} - z} < \delta(y^{\nu},\eta)$ also has 
$$\abs{f(y^{\nu}) - f(z)} < \frac{1}{\eta}.$$
Here we use function notation for $\delta$ to emphasize that $\delta$
depends on both $y^{\nu}$ and $\eta$.  

For each $\eta$, define a sequence such that 
$$z^{\nu, \eta} = \begin{cases} 
0 & \text{ when }  \frac{1}{\nu} >\delta(y^{1}, \eta) \\ 
y^{1} & \text{ when }  \delta(y^{2}, \eta) \leq \frac{1}{\nu} <
\delta(y^{1},\eta) \\
y^{2} & \text{ when }  \delta(y^{3}, \eta) \leq \frac{1}{\nu} <
\min\{\delta(y^{2}, \eta), \delta(y^{1},\eta) \} \\
y^{3} & \text{ when }  \delta(y^{4}, \eta) \leq \frac{1}{\nu} <
\min_{i \leq 4}{\delta(y^{i}, \eta)} \\
\vdots &  \vdots \end{cases}$$
Extracting a diagonal subsequence from the sequences generated
according to this procedure gives a sequence $\theta^{\nu}$ such that
$\theta^{\nu} \to \theta$ and $\frac{1}{\nu} < \delta(\theta^{\nu},
\nu)$. In
particular, $\abs{f(\theta^{\nu}) - f(z)} < \frac{1}{\nu}$ for $z$ with
$\norm{\theta^{\nu}-z} < \frac{1}{\nu} $.

Hence, for any $\epsilon > 0$, choosing $\nu > \frac{2}{\epsilon}$
gives
\begin{align*}
\abs{f^{\nu}(\theta^{\nu}) - f(\theta)} 
&\leq \abs{f^{\nu}(\theta^{\nu}) - f(\theta^{\nu})} +
\abs{f(\theta^{\nu}) -f(\theta)} \\
&\leq \frac{\epsilon}{2} + \frac{\epsilon}{2} = \epsilon
\end{align*}

We conclude that $\lim_{\nu} f^{\nu}(\theta^{\nu}) = f(\theta)$, so
the
result is proven.
\end{proof}
%\smartqed

%%%%%%%%%%%%%%%%%%%%%%%%%%%%%%%%%%%%%%%%%%%%%%%%%%%%%%%%%%%%%%%%%%%%%%%%%%%%%%%%%%%%%%%%%%%%%%%%%

\end{document}